\begin{document}

\title*{An Invariant-region-preserving (IRP) Limiter to DG Methods for Compressible Euler Equations}
\authorrunning{An IRP limiter for compressible Euler equations}
% Use \titlerunning{Short Title} for an abbreviated version of
% your contribution title if the original one is too long
\author{Yi Jiang and Hailiang Liu}
% Use \authorrunning{Short Title} for an abbreviated version of
% your contribution title if the original one is too long

\institute{Yi Jiang \at Department of Mathematics, Iowa State University, Ames, IA, 50010, USA. \\
  \email{yjiang1@iastate.edu}
\and Hailiang Liu \at Department of Mathematics, Iowa State University, Ames, IA, 50010, USA. \\
\email{hliu@iastate.edu}}
%
% Use the package "url.sty" to avoid
% problems with special characters
% used in your e-mail or web address
%
\maketitle

%\thanks{This work was supported by the National Science Foundation under Grant DMS1312636 and by NSF Grant RNMS (Ki-Net) 1107291.}

\abstract{We introduce an explicit invariant-region-preserving limiter applied to DG methods for compressible Euler equations. The invariant region considered consists of positivity of density and pressure and a maximum principle of a specific entropy. The modified polynomial by the limiter preserves the cell average, lies entirely within the invariant region and does not destroy the high order of accuracy for smooth solutions. Numerical tests are presented to illustrate the properties of the limiter. In particular, the tests on Riemann problems show that the limiter helps to damp the oscillations near discontinuities.}
%\subjclass[2000]{65M60, 35L65, 35L45}
\keywords{Gas dynamics, discontinuous Galerkin method, Invariant region} \\
{\sl 2010 Mathematics Subject Classification. 65M60, 35L65, 35L45.}
\section{Introduction}
\label{sec:1}
We consider the one dimensional version of the compressible Euler equations for the perfect gas in gas dynamics:
\begin{equation}\label{IVP}
\begin{aligned}
&\vec{w}_t+F(\vec{w})_x=0, \quad t>0, \; x\in \mathbb{R},\\
&\vec{w}=(\rho,m,E)^\top, \quad F(\vec{w})=(m,\rho u^2+p,(E+p)u)^\top
\end{aligned}
\end{equation}
with 
\begin{equation}
m=\rho u, \; E=\frac{1}{2}\rho u^2+\frac{p}{\gamma -1},
\end{equation}
where $\gamma>0$ is a constant ($\gamma =1.4$ for the air), $\rho$ is the density, $u$ is the velocity, $m$ is the momentum, $E$ is the total energy and $p$ is the pressure; supplemented by initial data $\mathbf{w}_0(x)$. For the associated entropy function $s=\log\left(\frac{p(x)}{\rho ^{\gamma}(x)} \right)$,  it is known that 
\begin{equation}
A=\{(\rho, m, E)^\top, \rho>0, \;p >0, \; s\geq s_0\}
\end{equation}
for any $s_0\in \mathbb{R}$ is an invariant region in the sense that if $\mathbf{w}_0(x) \in A$, then $\mathbf{w}(x, t) \in A$ for all $t>0$ (see e.g. \cite{Sm83, Hoff85}).  At numerical level this set is proved to be invariant by the first order Lax-Friedrichs scheme (see \cite{Frid01}), and by the first order Finite Element method (see \cite{GP16}), in which a  larger class of hyperbolic conservation laws is considered. It is difficult, if not impossible, to preserve such set by a high order numerical method unless some nonlinear limiter is imposed at each step while marching in time. In this work we design such a limiter.
%We note that the general theory of positively invariant regions for hyperbolic systems was developed as early as in 1970s, see \cite{Sm83} and \cite{Hoff85}, with further applications to numerical methods in \cite{Hoff79} and \cite{Frid01}.

In recent years an interesting mathematical literature has developed devoted to high order maximum-principle-preserving schemes 
for scalar conservation equations (see \cite{ZhangShu2010a}) and positivity-preserving schemes for hyperbolic systems including compressible Euler equations (see e.g. \cite{PerthameShu1996, ZhangShu2010b, ZhangXiaShu2012}). In \cite{PerthameShu1996} up to third order positivity-preserving finite volume schemes are constructed based on positivity-preserving properties by the corresponding first order schemes for both density and pressure of one and two dimensional compressible Euler equations. Following \cite{PerthameShu1996}, positivity-preserving high order DG schemes for compressible Euler equations were first introduced in  \cite{ZhangShu2010b}, where  the limiter in \cite{ZhangShu2010a} is generalized. A recent work by Zhang and Shu in \cite{ZhangShu2012} introduced a minimum-entropy-principle-preserving limiter for high order schemes to the compressible Euler equation. In their work, the limiter for entropy part is enforced separately from the ones for the density and pressure and is given implicitly with the limiter parameter solved by Newton's iteration. 

For the isentropic gas dynamics, the invariant region is bounded by two global Riemann invariants; for which the authors have designed an explicit limiter in \cite{JL16} to preserve the underlying invariant region, called an invariant-region-preserving (IRP)  limiter.  Our goals in this work are to design an IRP limiter for the compressible Euler system (\ref{IVP}) and to rigorously prove that such a limiter does not destroy the high order accuracy in general cases. Our limiter differs from that in \cite{ZhangShu2012} in two aspects: (i) it is given in an explicit form; (ii) the scaling reconstruction depends on a uniform parameter for the whole vector solution polynomial; in addition to the rigorous proof of the preservation of the accuracy by the limiter.  As a result, the limiter preserves the positivity of density and pressure and also a maximum principle for the specific entropy \cite{Tadmor}, with reduced computational costs  in numerical implementations.
\section{The Limiter}
\label{sec:2}
 We construct a novel limiter based on both the cell average (strictly in $A$) and the high order polynomial approximation, 
 which is not entirely in $A$;  through a linear convex combination as  in \cite{ZhangShu2010a, ZhangShu2012}.
\subsection{Averaging is a Contraction}
\label{subsec:2.1} For initial density $\rho _0>0$ and pressure $p_0>0$, we fix
\begin{equation}\label{s0}
s_0=\inf _x\log \left(\frac{p_0(x)}{\rho ^{\gamma}_0(x)} \right),
\end{equation}
and define $q=(s_0-s)\rho$, then the set $A$ is equivalent to the following set: 
\begin{equation}
\Sigma =\{\mathbf{w}:\quad \rho >0, \; p>0, \; q\leq 0 \},
\end{equation}
which is convex due to the concavity of $p$ and convexity of $q$.  By using set $\Sigma$ we are able to work out an explicit limiter which has the invariant-region-preserving property.   Numerically, the set of admissible states is defined as
\begin{equation}
\Sigma^{\epsilon}=\{\vec{w}: \quad  \rho \geq \epsilon, \; p \geq \epsilon, \; q \leq 0\},
\end{equation}
with its interior denoted by 
\begin{equation}
{\Sigma}^{\epsilon}_0=\{\vec{w}: \quad \rho > \epsilon, \; p> \epsilon, \; q<0\},
\end{equation}
where $\epsilon$ is a small positive number chosen (say as $10^{-13}$ in practice) so that $q$ is well defined.  

For any bounded interval $I$ (or bounded domain in multi-dimensional case), we define the average of $\vec{w}(x)$ by 
\begin{equation}
\bar{\vec{w}}=\frac{1}{|I|} \int_I \vec{w}(x)dx
\end{equation}
where $|I|$ is the measure of $I$.  Such an averaging operator is a contraction:  
\begin{lemma}\label{lem: cvgIn}
Let $\vec{w}(x)$ be non-trivial vector polynomials. If $\vec{w}(x)\in {\Sigma}^{\epsilon}$ for all $x\in I$, then $\bar{\vec{w}}\in \Sigma^{\epsilon}_0$ for any bounded interval $I$.
\end{lemma}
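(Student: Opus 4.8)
The plan is to argue by contradiction. Since $p$ is concave and $q$ is convex in $\vec{w}$, the set $\Sigma^{\epsilon}$ is closed and convex, and $\bar{\vec{w}}$ — being a uniform limit of Riemann sums, each a convex combination of the values $\vec{w}(x)\in\Sigma^{\epsilon}$ — already lies in $\Sigma^{\epsilon}$; so it suffices to show that $\bar{\vec{w}}$ cannot sit on $\partial\Sigma^{\epsilon}$, i.e.\ that each of the three defining inequalities is in fact strict. For the density this is immediate: $\bar\rho=\frac{1}{|I|}\int_I\rho(x)\,dx\ge\epsilon$, and equality would force the nonnegative continuous function $\rho(x)-\epsilon$ to vanish identically on $I$, i.e.\ $\rho\equiv\epsilon$, contradicting non-triviality; hence $\bar\rho>\epsilon$.

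For the specific-entropy constraint I would exploit that $q$ is not merely convex but \emph{strictly} convex on $\{\rho>0,\ p>0\}$: indeed $q=(s_0-s)\rho=s_0\rho-\rho s$ differs from the mathematical entropy $-\rho s$ only by the linear term $s_0\rho$, and $-\rho s$ is the classical strictly convex entropy of (\ref{IVP}) (for $\gamma>1$). Jensen's inequality then gives $q(\bar{\vec{w}})\le\frac{1}{|I|}\int_I q(\vec{w}(x))\,dx\le0$, and equality in the first step can hold only if $\vec{w}$ is constant on $I$; by non-triviality, $q(\bar{\vec{w}})<0$.

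The pressure is the delicate case, because $p$ is concave but not strictly so — it is affine along the directions $(1,u,0)$ and $(0,0,1)$, equivalently affine on each plane $\{m=u\rho\}$ — so Jensen alone leaves no slack there. Jensen still gives $p(\bar{\vec{w}})\ge\frac{1}{|I|}\int_I p(\vec{w}(x))\,dx\ge\epsilon$, and these two estimates are simultaneously equalities only if (a) $p(\vec{w}(x))\equiv\epsilon$ on $I$, and (b) the velocity $u(x)=m(x)/\rho(x)$ is a constant $\bar u$ on $I$ (so that the planar curve $x\mapsto(\rho(x),m(x))$ lies on a single ray through the origin). In that situation $\vec{w}(x)=\bigl(\rho(x),\,\bar u\,\rho(x),\,\tfrac12\bar u^{2}\rho(x)+\tfrac{\epsilon}{\gamma-1}\bigr)$ is an affine image of the scalar polynomial $\rho(x)$, and the plan is to bring in the still-unused constraint $q(\vec{w}(x))\le0$ on $I$ together with the non-triviality of $\vec{w}$ to reach a contradiction. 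I expect this last point — forcing the pressure strictly inside when the cell values happen to align along a ruling of the pressure surface — to be the main obstacle, since it is the one place where concavity by itself is not enough and the three constraints must be played off against one another. Once (a)--(b) are ruled out, $p(\bar{\vec{w}})>\epsilon$, and combining the three strict inequalities gives $\bar{\vec{w}}\in\Sigma^{\epsilon}_{0}$.
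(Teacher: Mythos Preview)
Your entropy argument is correct and matches the paper's in substance: both rest on the strict convexity of $q$ (equivalently, the strict concavity of $g_1=\rho s$). You invoke the strict form of Jensen's inequality directly, while the paper reaches the same conclusion by a second-order Taylor expansion of $g_1$ about $\bar{\vec w}$ and the sign of its Hessian; the underlying idea is identical. On density, note that $\rho\equiv\epsilon$ forces only the first component of $\vec w$ to be constant, which does not by itself contradict non-triviality of the \emph{vector} polynomial; the paper's ``obvious'' carries the same gap.

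On pressure you are more careful than the paper, and with cause. The paper simply asserts that ``a similar contradiction argument'' works, but, as you correctly observe, $p(\vec w)=(\gamma-1)\bigl(E-\tfrac{m^2}{2\rho}\bigr)$ is concave yet \emph{not strictly} concave: it is affine on every constant-velocity plane $m=u\rho$, so the Hessian-based argument the paper uses for $g_1$ cannot be transplanted to $p$. Unfortunately, your proposed rescue --- bringing in the remaining constraint $q\le 0$ together with non-triviality --- cannot be completed either. Take $I=[-1,1]$, $\gamma=1.4$, $\epsilon=0.1$, $s_0=-1$, and the linear vector polynomial
\[
\rho(x)=0.2+0.05x,\qquad m(x)=\rho(x),\qquad E(x)=\tfrac12\rho(x)+\tfrac{\epsilon}{\gamma-1}.
\]
All three components are non-constant; one checks directly that $\rho(x)\ge\epsilon$, $p(\vec w(x))\equiv\epsilon$, and $q(\vec w(x))<0$ on $I$, so $\vec w(x)\in\Sigma^{\epsilon}$ for every $x\in I$; yet $p(\bar{\vec w})=\epsilon$, whence $\bar{\vec w}\notin\Sigma^{\epsilon}_0$. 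The obstacle you anticipated is therefore genuine and not merely technical: neither your outline nor the paper's ``similar argument'' can deliver strict inequality for the pressure from the stated hypotheses alone --- an additional nondegeneracy assumption is needed for that part of the lemma.
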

\begin{proof}
For the entropy part, since $q$ is convex, using Jensen's inequality and the assumption, we have
\begin{equation}
q(\bar{\vec{w}})=q\left(\frac{1}{|I|}\int _I \vec{w}(x)dx \right)\leq \frac{1}{|I|}\int _Iq(\vec{w}(x))dx\leq 0.
\end{equation}
With this we can show $q(\bar{\vec{w}})<0$. Otherwise, if $q(\bar{\vec{w}})=0$, we must have $q(\vec{w}(x))=0$ for all $x \in I$; that is
\begin{equation}
(s_0-s(\bar{\vec{w}}))\bar{\rho}=(s_0-s(\vec{w}(x)))\rho(x).
\end{equation}
By taking average of this relation over $I$ on both sides, we have for $g_1=s\rho$,
\begin{equation}
s_0\bar{\rho}-g_1(\bar{\vec{w}})=s_0\bar{\rho}-\frac{1}{|I|}\int _Ig_1(\vec{w}(x))dx.
\end{equation}
This gives
\begin{equation}
\frac{1}{|I|}\int _Ig_1(\vec{w}(x))dx=g_1(\bar{\vec{w}}).
\end{equation}
By taking the Taylor expansion around $\bar{\vec{w}}$, we have
\begin{equation}
g_1(\vec{w}(x))=g_1(\bar{\vec{w}})+\triangledown _{\vec{w}}g_1(\bar{\vec{w}})\cdot \xi+\xi ^\top H_1\xi, \quad \forall x\in I, \quad \xi :=\vec{w}(x)-\bar{\vec{w}},
\end{equation}
which upon integration yields $\frac{1}{|I|}\int _I\xi ^TH_1\xi dx=0$, where $H_1$ is the Hessian matrix of $g_1$. This combined with the strict concavity of $g_1$ ensures that $\vec{w}(x)\equiv \bar{\vec{w}}$, which contradicts the assumption.

We can show $p(\bar{\vec{w}})>\epsilon$ by a similar contradiction argument. The density part is obvious.
\end{proof}

\subsection{Reconstruction}
\label{subsec:2.2}
Let $\vec{w}_h(x)=(\rho _h(x),m_h(x),E_h(x))^\top$ be a vector of polynomials of degree $k$ over an interval $I$, which is a high order approximation to the smooth function $\vec{w}(x)=(\rho(x),m(x),E(x))^\top \in {\Sigma}^{\epsilon}$.   We  assume that the average $\bar{\vec{w}}_h\in {\Sigma}^{\epsilon}_0$, but $\vec{w}_h(x)$ is not entirely located in ${\Sigma}^{\epsilon}$ for $x\in I$, then we can use the average as a reference in the following  reconstruction 
\begin{equation}\label{bb}
\tilde{\vec{w}}_h(x)=\theta \vec{w}_h(x)+(1-\theta)\bar{\vec{w}}_h,
\end{equation}
where 
\begin{equation}\label{limiter}
\theta = \min \{1, \theta _1, \theta _2,\theta _3\},
\end{equation}
with 
\begin{equation*}
\theta _1=\frac{\bar{\rho}_h-\epsilon}{\bar{\rho}_h-\rho _{h,\min}}, \quad \theta _2=\frac{p(\bar{\vec{w}}_h)-\epsilon}{p(\bar{\vec{w}}_h)-p_{h,\min}}, \quad \theta _3=\frac{-q(\bar{\vec{w}}_h)}{q_{h,\max}-q(\bar{\vec{w}}_h)}
\end{equation*}
and
\begin{equation}\label{aa}
\rho _{h,\min}=\min _{x\in I}\rho _h(x), \quad p_{h,\min}=\min _{x\in I}p(\vec{w}_h(x)), \quad q_{h,\max}=\max _{x\in I}q(\vec{w}_h(x)).
\end{equation}
Note that $p(\bar{\vec{w}}_h)>p_{h,\min}$ and $q(\bar{\vec{w}}_h)<q_{h, \max}$ due to the concavity of $p$ and convexity of $q$. Therefore $\theta _i's$ are well-defined and positive, for $i=1,2,3$. We can prove that this reconstruction has three desired properties, summarized in the following.
\begin{theorem}
\label{BigThm}
The reconstructed polynomial $\tilde{\mathbf{w}}_h(x)$ satisfies the following three properties:
\begin{itemize}
\item[(i)] $\quad$   the average is preserved, i.e. $\bar{\mathbf{w}}_h=\bar{\tilde{\mathbf{w}}}_h$;
\item[(ii)] $\quad$  $\tilde{\mathbf{w}}_h(x)$  lies entirely within invariant region ${\Sigma}^{\epsilon},  \forall x\in I$;
\item[(iii)] $\quad$  order of accuracy is maintained, i.e.,  $\| \tilde{\mathbf{w}}_h-\mathbf{w}\| _{\infty} \leq C \| \mathbf{w}_h-\mathbf{w}\| _{\infty}$,  provided $\| \mathbf{w}_h-\mathbf{w}\| _{\infty}$ is sufficient small, 
where $C$ is a positive constant that only depends on $\bar{\mathbf{w}}_h, \mathbf{w}$, and the invariant region $\Sigma^{\epsilon}$.
\end{itemize}
\end{theorem}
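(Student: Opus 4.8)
The plan is to establish the three claims in turn; items (i) and (ii) are short algebraic consequences of the definition of $\theta$ together with the concavity of $p$ and the convexity of $q$, while (iii) carries the real content. For (i), since $\theta$ is constant on $I$, integrating (\ref{bb}) over $I$ yields $\bar{\tilde{\vec{w}}}_h=\theta\bar{\vec{w}}_h+(1-\theta)\bar{\vec{w}}_h=\bar{\vec{w}}_h$. For (ii) I would verify the three defining inequalities of $\Sigma^\epsilon$ pointwise in $x\in I$: for the density, which is affine in $\vec{w}$, $\tilde{\rho}_h(x)=\bar{\rho}_h-\theta(\bar{\rho}_h-\rho_h(x))\ge\bar{\rho}_h-\theta(\bar{\rho}_h-\rho_{h,\min})\ge\epsilon$ because $\theta\le\theta_1$; for the pressure, concavity of $p$ gives $p(\tilde{\vec{w}}_h(x))\ge\theta p(\vec{w}_h(x))+(1-\theta)p(\bar{\vec{w}}_h)\ge\theta p_{h,\min}+(1-\theta)p(\bar{\vec{w}}_h)\ge\epsilon$ because $\theta\le\theta_2$; and for the entropy variable, convexity of $q$ gives $q(\tilde{\vec{w}}_h(x))\le q(\bar{\vec{w}}_h)+\theta(q_{h,\max}-q(\bar{\vec{w}}_h))\le0$ because $\theta\le\theta_3$ and $q(\bar{\vec{w}}_h)<0$ by the standing assumption $\bar{\vec{w}}_h\in\Sigma^\epsilon_0$.

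For (iii), set $\delta:=\|\vec{w}_h-\vec{w}\|_\infty$. From (\ref{bb}) one has the identity $\tilde{\vec{w}}_h-\vec{w}_h=(1-\theta)(\bar{\vec{w}}_h-\vec{w}_h)$, so the triangle inequality gives
\[
\|\tilde{\vec{w}}_h-\vec{w}\|_\infty\le\delta+(1-\theta)\,\|\vec{w}_h-\bar{\vec{w}}_h\|_\infty .
\]
Since $\vec{w}_h\to\vec{w}$ uniformly and $\bar{\vec{w}}_h\to\bar{\vec{w}}$ as $\delta\to0$, the factor $\|\vec{w}_h-\bar{\vec{w}}_h\|_\infty$ is at most a constant $M$ depending only on $\vec{w}$ (e.g. $M=\|\vec{w}-\bar{\vec{w}}\|_\infty+1$) once $\delta$ is small, so it suffices to show $1-\theta=O(\delta)$. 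If $\theta=1$ there is nothing to prove; otherwise $1-\theta=\max\{1-\theta_1,\,1-\theta_2,\,1-\theta_3\}$ and I would bound each of the three quantities (the bounds below hold trivially whenever $1-\theta_i\le0$). For $\theta_1$: if $1-\theta_1=\frac{\epsilon-\rho_{h,\min}}{\bar{\rho}_h-\rho_{h,\min}}>0$ then $\rho_{h,\min}<\epsilon$, and choosing $x^*\in I$ with $\rho_h(x^*)=\rho_{h,\min}$ we get $\epsilon-\rho_{h,\min}\le\rho(x^*)-\rho_h(x^*)\le\delta$ from $\vec{w}(x^*)\in\Sigma^\epsilon$, while $\bar{\rho}_h-\rho_{h,\min}\ge\bar{\rho}_h-\epsilon>0$ is a fixed positive number; hence $1-\theta_1\le\delta/(\bar{\rho}_h-\epsilon)$. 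The quantities $1-\theta_2=\frac{\epsilon-p_{h,\min}}{p(\bar{\vec{w}}_h)-p_{h,\min}}$ and $1-\theta_3=\frac{q_{h,\max}}{q_{h,\max}-q(\bar{\vec{w}}_h)}$ are handled in the same way, with the trivial componentwise estimate replaced by the local Lipschitz continuity of the smooth functions $p$ and $q$ on a fixed neighborhood of the compact set $\vec{w}(\overline{I})$ — legitimate because $\vec{w}(\overline{I})\subset\{\rho>0,\,p>0\}$, which is open, and $\vec{w}_h(\overline{I})$ stays inside such a neighborhood once $\delta$ is small — giving $\epsilon-p_{h,\min}\le L\delta$ and $q_{h,\max}\le L\delta$, while the denominators satisfy $p(\bar{\vec{w}}_h)-p_{h,\min}\ge p(\bar{\vec{w}}_h)-\epsilon>0$ and $q_{h,\max}-q(\bar{\vec{w}}_h)\ge-q(\bar{\vec{w}}_h)>0$, again fixed positive by $\bar{\vec{w}}_h\in\Sigma^\epsilon_0$. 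Combining, $1-\theta\le\tilde{C}\delta$, whence $\|\tilde{\vec{w}}_h-\vec{w}\|_\infty\le(1+\tilde{C}M)\delta=:C\delta$ with $C$ depending only on $\bar{\vec{w}}_h$, $\vec{w}$ and $\epsilon$, as claimed.

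The only genuine obstacle lies in (iii), and it has two facets: showing that the three numerators $\epsilon-\rho_{h,\min}$, $\epsilon-p_{h,\min}$ and $q_{h,\max}$ are really $O(\delta)$ — which forces one to evaluate the nonlinear functions $p$ and $q$ at the polynomial $\vec{w}_h$ near, and possibly just outside, $\Sigma^\epsilon$, hence the need for a uniform Lipschitz bound on a fixed neighborhood and for $\delta$ to be sufficiently small — and showing that the three denominators stay bounded away from $0$ uniformly in $\delta$, which is exactly the strict containment $\bar{\vec{w}}_h\in\Sigma^\epsilon_0$ assumed in the reconstruction. Everything else is bookkeeping with the triangle inequality and the identity $\tilde{\vec{w}}_h-\vec{w}_h=(1-\theta)(\bar{\vec{w}}_h-\vec{w}_h)$.
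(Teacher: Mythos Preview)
Your argument is correct. Parts (i) and (ii) match the paper's proof essentially verbatim: the paper also uses that $\theta$ is constant for (i), and for (ii) organizes by cases on which $\theta_i$ attains the minimum, but the inequalities used (linearity of $\rho$, concavity of $p$, convexity of $q$, and $\theta\le\theta_i$) are the same as yours.

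For (iii) your route is genuinely different and more elementary than the paper's. The paper also starts from the identity $\tilde{\vec w}_h-\vec w_h=(1-\theta)(\bar{\vec w}_h-\vec w_h)$ and the overshoot bound $\epsilon-p_{h,\min}\le \|\nabla p\|_\infty\,\delta$, but then invests considerable effort in controlling the factor $\|\bar{\vec w}_h-\vec w_h\|_\infty$: it expands $\vec w_h$ in a Lagrange basis at $k{+}1$ quadrature nodes, invokes the Lebesgue constant $\Lambda_{k+1}$ and a quadrature-weight ratio to bound the oscillation by $\sum_f(\bar f_h-\hat f_{h,\min})$, and finally uses the inequality $\theta_2\le\theta_1$ to cancel part of this against the denominator $p(\bar{\vec w}_h)-p_{h,\min}$. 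You bypass all of this by the direct estimate $\|\vec w_h-\bar{\vec w}_h\|_\infty\le\|\vec w-\bar{\vec w}\|_\infty+2\delta$ and by bounding $1-\theta_i$ separately, using only the Lipschitz continuity of $p$ and $q$ on a fixed compact neighborhood of $\vec w(\overline I)$ for the numerators and the strict interior condition $\bar{\vec w}_h\in\Sigma^\epsilon_0$ for the denominators. What your approach buys is a cleaner constant depending only on $\bar{\vec w}_h$, $\vec w$, and $\epsilon$ (and $s_0$ through $q$), with no reference to the polynomial degree or quadrature rule; what the paper's approach buys is a constant that is in principle computable from the Lebesgue constant and quadrature weights, though it still ultimately appeals to bounds like $\bar E_h-\hat E_{h,\min}\le\bar E+1$ that rely on $\delta$ being small in the same qualitative way you do.
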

\begin{proof} (i) Since $0<\theta \leq 1$ is a uniform constant, average preservation is obvious.  \\
(ii) If  $\rho _{h,\min}\geq\epsilon$, $p_{h,\min}\geq\epsilon$, and $q_{h,\max}\leq 0$, then $\theta=1$, no reconstruction is needed. 
When $\theta =\theta _1$, we have
\begin{equation}
\begin{aligned}
\tilde{\rho}_h(x)=&\theta _1\rho _h(x)+(1-\theta _1)\bar{\rho}_h\\
=&(\bar{\rho}_h-\epsilon)\frac{\rho _h(x)-\rho _{h,\min}}{\bar{\rho}_h-\rho _{h,\min}}+\epsilon \geq \epsilon.
\end{aligned}
\end{equation}
Since $\theta _1\leq \theta _2$, we have $(p(\bar{\mathbf{w}}_h)-p_{h,\min})\theta _1+\epsilon\leq p(\bar{\mathbf{w}}_h)$. Therefore, by the concavity of $p$, we have
\begin{equation}
\begin{aligned}
p(\tilde{\mathbf{w}}_h)\geq &\theta _1p(\mathbf{w}_h)+(1-\theta _1)p(\bar{\mathbf{w}}_h)\\
=& \theta _1(p(\mathbf{w}_h)-p(\bar{\mathbf{w}}_h))+p(\bar{\mathbf{w}}_h)\\
\geq &\theta _1(p(\mathbf{w}_h)-p(\bar{\mathbf{w}}_h))+(p(\bar{\mathbf{w}}_h)-p_{h,\min})\theta _1+\epsilon\\
=& \theta _1(p(\mathbf{w}_h)-p_{h,\min})+\epsilon\geq \epsilon.
\end{aligned}
\end{equation}
For entropy part, since $\theta _1\leq \theta _3$, we have $\theta _1(q_{h,\max}-q(\bar{\mathbf{w}}_h))\leq -q(\bar{\mathbf{w}}_h)$. Therefore, by the convexity of $q$, we have
\begin{equation}
\begin{aligned}
q(\tilde{\mathbf{w}}_h)< &\theta _1q(\mathbf{w}_h)+(1-\theta _1)q(\bar{\mathbf{w}}_h)\\
=&\theta _1(q(\mathbf{w}_h)-q(\bar{\mathbf{w}}_h))+q(\bar{\mathbf{w}}_h)\\
\leq & \theta _1(q_{h,\max}-q(\bar{\mathbf{w}}_h))+q(\bar{\mathbf{w}}_h)\leq 0.
\end{aligned}
\end{equation}
In the case that $\theta =\theta _2$ or $\theta_3$ the proof is similar. \\
%In the case that $\theta =\theta _3$, the proofs for density and pressure parts are also similar to the other cases. For entropy part, due to the convexity of $q$, we have
%\begin{align*}
%q(\tilde{\mathbf{w}}_h) \leq &\theta _3q(\mathbf{w}_h)+(1-\theta _3)q(\bar{\mathbf{w}}_h)\\
%=&(-q(\bar{\mathbf{w}}_h))\frac{q(\mathbf{w}_h)-q(\bar{\mathbf{w}}_h)}{q_{h,\max}-q(\bar{\mathbf{w}}_h)}+q(\bar{\mathbf{w}}_h)\leq 0.
%\end{align*}
(iii) We prove for the case $\theta=\theta _2$, the other cases are similar.  In such case we only need to prove  
\begin{equation}\label{acc+}
\| \tilde{\mathbf{w}}_h-\mathbf{w}_h\| _{\infty} \leq C \| {\mathbf{w}}_h-\mathbf{w}\| _{\infty},
\end{equation}
from which (iii) follows by using the triangle inequality. Here and in what follows $\|\cdot\|_\infty:=\max_{x\in I}|\cdot|$.  We prove (\ref{acc+}) in four steps. 

Step 1. From (\ref{bb}) it follows that
\begin{equation}
\begin{aligned}
\| \tilde{\mathbf{w}}_h - \mathbf{w}_h\| _{\infty}
=&(1-\theta _2)\| \bar{\mathbf{w}}_h-\mathbf{w}_h\|_{\infty} \\
=&\frac{\max \limits _{x\in I} |\bar{\mathbf{w}}_h-\mathbf{w}_h(x)|}{p(\bar{\mathbf{w}}_h)-p_{h,\min}}(\epsilon -p _{h,\min}).
\end{aligned}
\end{equation}

Step 2. The overshoot estimate. Since $\mathbf{w}(x)\in \Sigma ^{\epsilon}$,
\begin{equation}
\epsilon -p_{h,\min}\leq \max _x(p(\mathbf{w})-p(\mathbf{w}_h))\leq C_1\| \mathbf{w}-\mathbf{w}_h\| _{\infty}, \quad C_1:=\| \triangledown p\| _{\infty}.
\end{equation}
%Denote $\| \triangledown p\| _{\infty}$ by $C_1$.\\

Step 3. We map $I$ to $[0, 1]$ by $\xi=(x-a)/(b-a)$ for $I=[a, b]$, and  let $l_{\alpha}(\xi)$ ($\alpha =1,\cdots, N$) be the Lagrange interpolating polynomials at quadrature points $\hat \xi^\alpha \in [0, 1]$ with $N=k+1$, then $\mathbf{w}_h(x)-\bar{\mathbf{w}}_h=\sum ^N_{\alpha =1}(\mathbf{w}_h(\hat{x}^{\alpha})-\bar{\mathbf{w}}_h)l_{\alpha}(\xi)$, where  $\hat{x}^{\alpha}=a+(b-a)\hat \xi^\alpha$.  Hence, we have
\begin{equation}
\begin{aligned}
\max \limits _{x\in I} |\bar{\mathbf{w}}_h-\mathbf{w}_h(x)|\leq &\max _{\xi \in [0, 1]}\sum \limits^N_{\alpha =1}|l_{\alpha}(\xi)||\bar{\mathbf{w}}_h-\mathbf{w}_h(\hat{x}^{\alpha})|\\
\leq &C_2\max _{\alpha}|\bar{\mathbf{w}}_h-\mathbf{w}_h(\hat{x}^{\alpha})|, 
\end{aligned}
\end{equation}
where $C_2=\Lambda _{k+1}([0, 1])\doteq\max \limits_{\xi \in [0, 1]}\sum \limits^N_{\alpha =1}|l_{\alpha}(\xi)|$ is the Lebesgue constant.  Note that
\begin{equation}
\max \limits_{\alpha}|\bar{\mathbf{w}}_h-\mathbf{w}_h(\hat{x}^{\alpha})|
\leq \max \limits_{\alpha}|\bar{\rho}_h-\rho _h(\hat{x}^{\alpha})|
  +  \max \limits_{\alpha}|\bar{m}_h-m_h(\hat{x}^{\alpha})|
  +  \max \limits_{\alpha}|\bar{E}_h-E_h(\hat{x}^{\alpha})|.
\end{equation}
Define
\begin{equation}
\hat f _{h,\min}\doteq \min \limits_{\alpha}f (\mathbf{w}_h(\hat{x}^{\alpha})),\quad \hat{f} _{h,\max}\doteq \max \limits_{\alpha}f(\mathbf{w}_h(\hat{x}^{\alpha})), 
\end{equation}
we can show that
\begin{equation}
\max \limits_{\alpha}|\bar f_h-f_h(\hat{x}^{\alpha}))| 
\leq \max \{ \bar{f}_h-\hat{f}_{h,\min},\hat{f}_{h,\max}-\bar{f}_h \}\leq C_3 (\bar f_h -\hat f_{h, min}),
\end{equation}
where 
\begin{equation}
C_3=\max \left\lbrace 1,  \frac{1-\min \limits _{\alpha}\hat{w}_{\alpha}}{\min \limits_{\alpha}\hat{w}_{\alpha}}\right\rbrace.
\end{equation}
Here  $f_h=\rho _h, m_h, E_h$. The type of estimates using $C_2$ and $C_3$ is known, see \cite[Lemma 7, Appendix C]{Zhang17}), where the proof was accredited to Mark Ainsworth. \\

Step 4. The above three steps lead to 
\begin{equation}\label{eq:step4}
\| \tilde{\mathbf{w}}_h-\mathbf{w}_h\| _{\infty} \leq  C_1C_2 C_3  \frac{B}{p(\bar{\mathbf{w}}_h)-p _{h,\min}} \| \mathbf{w}-\mathbf{w}_h\| _{\infty},
\end{equation}
with 
\begin{equation}
B= \bar{\rho}_h - \hat {\rho}_{h, min} +\bar m_h -\hat m_{h, min} +\bar E_h -\hat E_{h, min}.
\end{equation}
On one hand, we have $p_{h, min} \leq \epsilon$ since $\theta =\theta _2\leq 1$, leading to 
\begin{equation}
p(\bar{\mathbf{w}}_h)-p _{h,\min} \geq p(\bar{\mathbf{w}}_h) -\epsilon;
\end{equation}
On the other hand the assumption $\theta =\theta _2\leq \theta _1$ implies 
\begin{equation}
\bar {\rho}_h -\hat {\rho}_{h, min} \leq \left( \frac{\bar{\rho}_h -\epsilon}{p(\bar{\mathbf{w}}_h)-\epsilon}\right) \cdot 
\left({p(\bar{\mathbf{w}}_h)-p _{h,\min}}\right). 
\end{equation}
By the assumption on the smallness of $\| \mathbf{w}_h-\mathbf{w}\| _{\infty}$ we have 
\begin{equation}
\bar m_h -\hat m_{h, min} \leq {2\| m-m_h\| _{\infty}+\bar{m}-m_{\min}}
\end{equation}
and 
\begin{equation}
\bar E_h -\hat E_{h, min} \leq \bar{E}+1.
\end{equation}
where $E \geq \frac{\epsilon}{\gamma-1}$ is used.  Collecting the above estimates we take 
\begin{equation}
C_4=\frac{\bar{\rho}_h-\epsilon+2\| m-m_h\|_{\infty}+\bar{m}-m_{\min}+\bar{E}+1}{p(\bar{\mathbf{w}}_h)-\epsilon}
\end{equation}
to conclude the desired estimate in (iii) with $C=\Pi_{i=1}^4 C_i$. 
\end{proof}

\subsection{Algorithm}
\label{subsec:2.3}
Let $\vec{w}^n_h$ be the numerical solution generated from a high order scheme of an abstract form
\begin{equation}\label{abstsche}
\vec{w}^{n+1}_h=\mathcal{L}(\vec{w}^n_h),
\end{equation}
where $\vec{w}^n_h=\vec{w}^n_h(x)\in V_h$, which is a finite element space of piecewise polynomials of degree $k$ over each computational cell $I$. Assume $\lambda =\frac{\Delta t}{h}$ is the mesh ratio, where $h$ is the characteristic length of the mesh size.

Provided that scheme (\ref{abstsche}) has the following property: there exists $\lambda _0$, and a test set $S_I$ in each computational cell $I$ such that if
\begin{equation}\label{suff}
\lambda \leq \lambda _0 \quad {\rm and} \quad  \vec{w}^n_h\in {\Sigma}^{\epsilon}, \quad x\in S_I
\end{equation}
then
\begin{equation}\label{prop}
\bar{\vec{w}}^{n+1}_h\in \Sigma^{\epsilon}_0,
\end{equation}
then the IRP limiter can be applied with $I$ replaced by $S_I$ in (\ref{aa}), i.e., 
\begin{equation}
\label{defqp}
\rho _{h, \min} = \min _{x\in S_I}\rho _h(x),\quad p_{h, \min} = \min_{x\in S_I } p(\mathbf{w}_h(x)), \quad q_{h, \max} =\max _{x\in S_I}q(\mathbf{w}_h(x)).
\end{equation}
%where $S$ is the set of quadrature points in $I$, 
Our algorithm is given as follows: 

\textbf{Step 1.} Initialization: take the piecewise $L^2$ projection of $\vec{w}_0$ onto $V_h$, such that
\begin{equation}
\int_I (\vec{w}^0_h(x)-\vec{w}_0(x))\phi(x)dx =0, \quad \forall \phi \in V_h.
\end{equation}
Also from $\vec{w}_0$, we compute $s_0$ as defined in (\ref{s0}) to determine the invariant region $\Sigma^{\epsilon}$.\\

\textbf{Step 2.} Impose the modified limiter (\ref{bb}), (\ref{limiter}) with (\ref{defqp}) on $\vec{w}^n_h$  for $n=0, 1, \cdots $.
\iffalse
\begin{equation}
\tilde{\vec{w}}^n_h=(1-\theta)\bar{\vec{w}}^n_h+\theta \vec{w}^n_h, \quad \theta =\min \{1,\theta _1,\theta _2,\theta _3\},
\end{equation}
where 
\begin{equation}
\theta _1=\frac{\bar{\rho}^n_h-\epsilon}{\bar{\rho}^n_h-\rho _{h,\min}}, \quad 
\theta _2=\frac{p(\bar{\vec{w}}^n_h)-\epsilon}{p(\bar{\vec{w}}^n_h)-p_{h,\min}}, \quad
\theta _3=\frac{-q(\bar{\vec{w}}^n_h)}{q_{h,\max}-q(\bar{\vec{w}}^n_h)}.
\end{equation}
\fi

\textbf{Step 3.} Update by the scheme:
\begin{equation}
\vec{w}^{n+1}_h=\mathcal{L}(\tilde{\vec{w}}^n_h).
\end{equation}
Return to \textbf{Step 2}.
\begin{remark}
Indeed the limiter (\ref{bb}), (\ref{limiter}) with (\ref{defqp}) can well enhance the efficiency of computation, and  we will use this modified IRP limiter in the numerical experiments. Note that with (\ref{defqp}), (i) and (iii) in Theorem \ref{BigThm} remain valid, and the 
resulting reconstructed polynomial lies within invariant region $\Sigma ^{\epsilon}$ only for $x\in S_I$.  
\end{remark}
\begin{remark} Notice that Lemma \ref{lem: cvgIn} ensures that $\bar{\vec{w}}^0_h$ lies strictly within $\Sigma^{\epsilon}_0$, therefore the limiter is valid already at the initialization step. 
%Also (\ref{suff}) ensures that $\bar{\vec{w}}^0_h\in \Sigma^{\epsilon}_0$ by Lemma \ref{lem: cvgIn}.
%which results in that the evolved cell average $\bar{\vec{w}}^n_h$ also lies strictly within $\Sigma^{\epsilon}$ and therefore the high order of accuracy will not be destroyed through the reconstruction.
\end{remark}
\begin{remark}Some sufficient conditions for (\ref{suff}) to ensure the cell average propagation property (\ref{prop}) for the DG method have been obtained for one-dimensional case (\cite{ZhangShu2010b}), as well as for rectangular meshes (\cite{ZhangShu2010b, ZhangShu2012}) and triangular meshes (\cite{ZhangXiaShu2012}) in two-dimensional cases. For example, 
the test set $S_I$ and the CFL condition given  in \cite[Theorem 2.1]{ZhangShu2010b} is
\begin{equation}
S_I=\{\hat{x}^{\alpha}, \alpha =1,\cdots ,N \},
\end{equation}
which is a set of $N$-point Legendre Gauss-Lobatto quadrature on $I$ with $2N-3\geq k$, and 
\begin{equation}
\lambda \| (|u|+c)\| _{\infty}\leq \frac{1}{2}\hat{w}_1,
\end{equation}
where $\hat{w}_1$ is the first Legendre Gauss-Lobatto quadrature weights for the interval $[-\frac{1}{2},\frac{1}{2}]$ such that $\sum ^N_{\alpha}\hat{w}_{\alpha}=1$.
\end{remark}

\section{Numerical Tests}
\label{sec:4}
We present numerical tests for the IRP limiter applied to a general high order DG scheme with the Lax-Friedrich numerical flux, using a proper time discretization. The semi-discrete DG scheme we take is a closed ODE system of the form 
\begin{equation}
\frac{d}{dt}\vec{W}=L(\vec{W}),
\end{equation}
where $\vec{W}$ consists of the unknown coefficients of the numerical solution in terms of the spatial basis, and $L$ is the corresponding spatial operator.

We consider the following two types of time discretizations. 
\begin{itemize}
\item[-] The third order SSP Runge-Kutta (RK3) method in \cite{ShuOsher1988} reads as
\begin{equation}
\begin{aligned}
\vec{W}^{(1)}=&\vec{W}^n+\Delta tL(\vec{W}^n)\\
\vec{W}^{(2)}=&\frac{3}{4}\vec{W}^n+\frac{1}{4}\vec{W}^{(1)}+\frac{1}{4}\Delta tL\left(\vec{W}^{(1)}\right)\\
\vec{W}^{n+1}=&\frac{1}{3}\vec{W}^n+\frac{2}{3}\vec{W}^{(2)}+\frac{2}{3}\Delta tL\left(\vec{W}^{(2)}\right).
\end{aligned}
\end{equation}
\item[-]The third order SSP multi-stage (MS) method in \cite{ShuTVD1988} reads as
\begin{equation}
\vec{W}^{n+1}=\frac{16}{27}(\vec{W}^n+3\Delta tL(\vec{W}^n))+\frac{11}{27}\left(\vec{W}^{n-3}+\frac{12}{11}\Delta tL(\vec{W}^{n-3})\right).
\end{equation}
\end{itemize} 
We apply the limiter at each time stage or each time step.

\begin{remark}
In the implementation of the third order SSP multi-step method, we apply SSP RK3 method in the first three time evolutions to obtain the starting values.
\end{remark}

In all of the following examples $\gamma =1.4$ is taken.

\runinhead{Example 1.}\textit{Accuracy Test}\\
We first test the accuracy of the IRP DG scheme. The initial condition is 
\begin{equation}
\rho _0(x)=1+\frac{1}{2}\sin(2\pi x),\quad u_0(x)=1, \quad p_0(x)=1. 
\end{equation}
The domain is $[0,1]$ and the boundary condition is periodic. The exact solution is
\begin{equation}
\rho (x,y,t)=1+\frac{1}{2}\sin(2\pi (x-t)), \quad u(x,t)=1, \quad p(x,t)=1.
\end{equation}
%We take $\epsilon =0$ for computation in this example. 
The results presented in Tables \ref{tb:p2WL} and \ref{tb:p3WL} show that using IRP limiter does not destroy high order accuracy.

\begin{table}[htbp]
\caption{}
\label{tb:p2WL}
\centering
\begin{tabular}{|c|cccc||cccc|}
\hline
$P^2$ DG&\multicolumn{4}{|c||}{SSP RK}  &\multicolumn{4}{|c|}{SSP multi-step}
\\ \hline
N &$L^{\infty}$Error &Order &$L^1$Error & Order &$L^{\infty}$Error &Order &$L^1$Error & Order\\
\hline
8    &5.43E-04 &/    &5.77E-04 &/    &5.35E-04 &/    &5.70E-04 &/ \\
16   &8.98E-05 &2.60 &8.55E-05 &2.75 &8.89E-05 &2.59 &8.53E-05 &2.74 \\
32   &1.04E-05 &3.11 &1.09E-05 &2.98 &1.03E-05 &3.11 &1.08E-05 &2.99 \\
64   &1.33E-06 &2.97 &1.40E-06 &2.95 &1.34E-06 &2.94 &1.39E-06 &2.95 \\
128  &1.67E-07 &2.99 &1.75E-07 &3.00 &1.75E-07 &2.94 &1.76E-07 &2.98 \\
\hline
\end{tabular}
\end{table}

%\begin{table}[htbp]
%\caption{•}
%\label{tb:p3WOL}
%\centering
%\begin{tabular}{|c|cccc||cccc|}
%\hline
%$P^3$ DG&\multicolumn{4}{|c||}{SSP RK}  &\multicolumn{4}{|c|}{SSP multi-step}
%\\ \hline
%N &$L^{\infty}$Error &Order &$L^1$Error & Order &$L^{\infty}$Error &Order &$L^1$Error & Order\\
%\hline
%8    &1.03E-05 &/    &1.03E-05 &/    &1.02E-05 &/    &1.01E-05 &/ \\
%16   &7.05E-07 &3.87 &7.12E-07 &3.85 &6.89E-07 &3.88 &6.97E-07 &3.86 \\
%32   &5.84E-08 &3.59 &6.00E-08 &3.57 &5.69E-08 &3.60 &5.84E-08 &3.58 \\
%64   &3.04E-09 &4.26 &3.15E-09 &4.25 &2.90E-09 &4.30 &3.00E-09 &4.29 \\
%128  &2.05E-10 &3.90 &2.11E-10 &3.90 &1.90E-10 &3.93 &1.96E-10 &3.94 \\
%\hline
%\end{tabular}
%\end{table}

\begin{table}[htbp]
\caption{}
\label{tb:p3WL}
\centering
\begin{tabular}{|c|cccc||cccc|}
\hline
$P^3$ DG&\multicolumn{4}{|c||}{SSP RK}  &\multicolumn{4}{|c|}{SSP multi-step}
\\ \hline
N &$L^{\infty}$Error &Order &$L^1$Error & Order &$L^{\infty}$Error &Order &$L^1$Error & Order\\
\hline
8    &1.44E-05 &/    &1.09E-05 &/    &1.42E-05 &/    &1.08E-05 &/ \\
16   &1.39E-06 &3.37 &7.23E-07 &3.92 &1.37E-06 &3.37 &7.07E-07 &3.94 \\
32   &7.06E-08 &4.30 &6.14E-08 &3.56 &6.93E-08 &4.31 &5.99E-08 &3.56 \\
64   &6.34E-09 &3.48 &3.18E-09 &4.27 &6.21E-09 &3.48 &3.03E-09 &4.30 \\
128  &3.50E-10 &4.18 &2.12E-10 &3.91 &3.30E-10 &4.23 &1.97E-10 &3.94 \\
\hline
\end{tabular}
\end{table}
In the following examples, we solve (\ref{IVP}) subject to several different Riemann initial data. We compare the numerical solution obtained from the DG scheme with IRP limiter (\ref{bb}), (\ref{limiter}) with (\ref{defqp}) and the one obtained from the DG scheme with only positivity-preserving limiter, that is, using $\theta =\min\{1,\theta _1,\theta _2\}$, where $\theta _1$ and $\theta _2$ are defined as in (\ref{limiter}).

\runinhead{Example 2.}\textit{Lax Shock Tube Problem}\\
Consider the Lax initial data:
\begin{equation}
(\rho,m,E)=
\begin{cases}
(0.445,0.311,8.928), \quad x<0, \\
(0.5, 0, 1.4275), \quad x>0,
\end{cases}
\end{equation}
which induces a composite wave, a rarefaction wave followed by a contact discontinuity and then by a shock. We calculate the exact solution by following the formulas given in \cite[Section 14.11]{FVLeVeque}. The $P^2$-DG scheme with SSP RK3 method in time discretization is tested on $N=100$ cells over $x\in [-2,2]$ at final time $T=0.5$. From Fig. \ref{fig:Lax}, we see that the IRP limiter helps to damp oscillations near the discontinuities.

\begin{figure}[htbp]
\caption{Lax shock tube problem. Exact solution (solid line) vs numerical solution (dots); Top: With positive-preserving limiter; Bottom: With IRP limiter}
\label{fig:Lax}
\centering
\includegraphics[scale=.65]{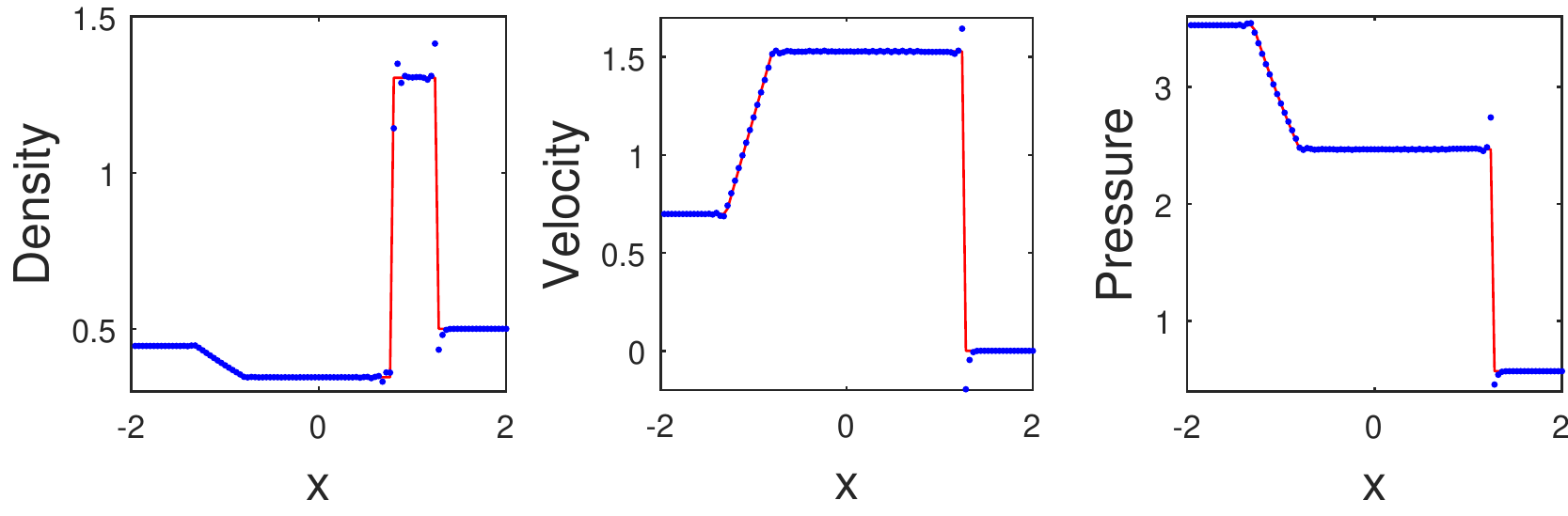}\\
\includegraphics[scale=.65]{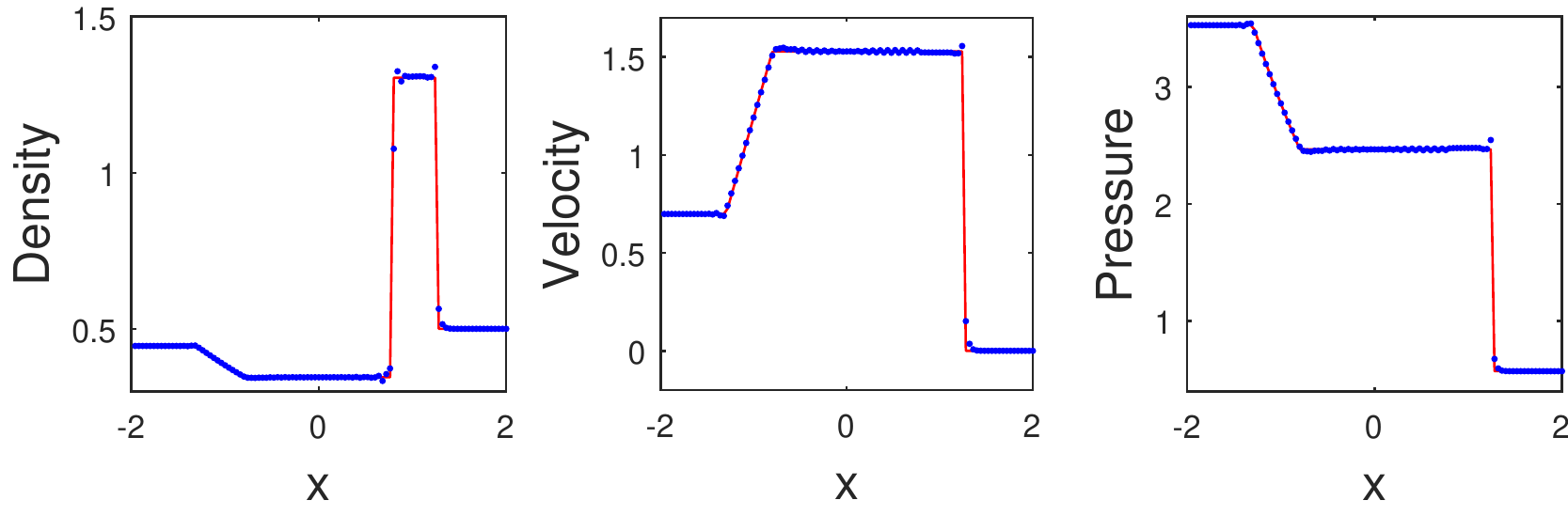}
\end{figure}

\runinhead{Example 3.}\textit{Shu-Osher Shock Tube Problem}\\
Consider the Shu-Osher problem:
\begin{equation}
(\rho,u,p)=
\begin{cases}
(3.857143, 2.629369, 10.3333), \quad x<-4,\\
(1+0.2\sin5x, 0,1), \quad x\geq -4.
\end{cases}
\end{equation}
The $P^2$-DG scheme with SSP RK3 method in time discretization is tested on $N=100$ cells over $x\in [-5,5]$ at final time $T=1.8$. The reference solution is obtained from $P^2$-DG scheme with SSP RK3 method on $N=2560$ cells. The results presented in Fig. \ref{fig:ShuOsher} show that the shock is captured well. 

\begin{figure}[htbp]
\caption{Shu-Osher problem. Exact solution (solid line) vs numerical solution (dots); Left: With positive-preserving limiter; Right: With IRP limiter}
\label{fig:ShuOsher}
\centering
\includegraphics[scale = .4]{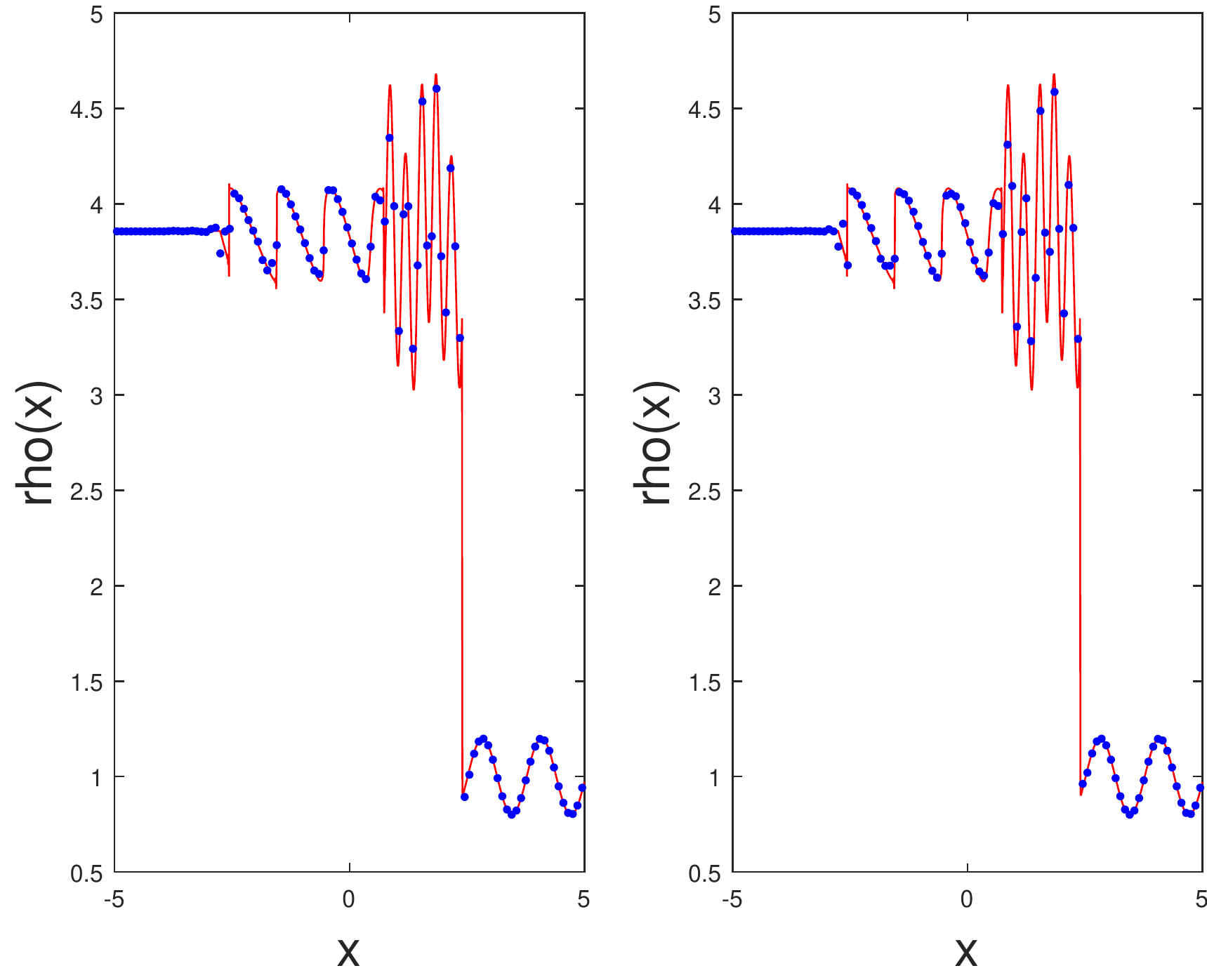}
\end{figure}

\section{Conclusion and Future Work}
\label{sec:5}
In this work, we introduced a novel IRP limiter for the one-dimensional compressible Euler equations. The limiter is made so that the reconstructed polynomial preserves the cell average, lies entirely within the invariant region and does not destroy the original high order of accuracy for smooth solutions.  Moreover, this limiter is explicit and easy for computer implementation. %Numerical tests are presented to illustrate these properties of the limiter.  
Let us point out that the IRP limiter (\ref{bb}) may be applied to multi-dimensional compressible Euler equations as well if we replace 
$I$ in (\ref{aa}) by multi-dimensional cells or test set in each cell.  Implementation details are in a forthcoming paper.  Future work would be to investigate IRP limiters for more general hyperbolic systems or specific systems in important applications.  
%to identify the test set $S$ includes extensions to higher dimensional versions of the compressible Euler equation, the Navier Stokes equation, and more general hyperbolic systems. 

\begin{acknowledgement}This work was supported in part  by the National Science Foundation under Grant DMS1312636 and by NSF Grant RNMS (KI-Net) 1107291. 
%Any opinions, findings, and conclusions or recommendations expressed in this material are those of the authors and do not %necessarily reflect the views of the National Science Foundation. 
%If you want to include acknowledgments of assistance and the like at the end of an individual chapter please use the \verb|acknowledgement| environment -- it will automatically render Springer's preferred layout.
\end{acknowledgement}

%%%%%%%%%%%%%%%%%%%%%%%% referenc.tex %%%%%%%%%%%%%%%%%%%%%%%%%%%%%%
% sample references
% %
% Use this file as a template for your own input.
%
%%%%%%%%%%%%%%%%%%%%%%%% Springer-Verlag %%%%%%%%%%%%%%%%%%%%%%%%%%
%
% BibTeX users please use
% \bibliographystyle{}
% \bibliography{}
%
\biblstarthook{}

\end{document}